  \theoremstyle{remark}
  \theoremstyle{example}
  \newtheorem*{rem*}{\protect\remarkname}
  \providecommand{\remarkname}{Remark}
  \providecommand{\lemmaname}{Lemma}
\theoremstyle{plain}
\newtheorem{theorem}{\protect\theoremname}[section]  %%I changed the theorem counters by section. 
  \theoremstyle{definition}
  \newtheorem{example}[theorem]{\protect\examplename}
  \theoremstyle{definition}
  \newtheorem{definition}[theorem]{\protect\definitionname}
  \theoremstyle{remark}
  \newtheorem{remark}[theorem]{\protect\remarkname}
  \theoremstyle{lemma}
  \newtheorem{lemma}[theorem]{\protect\lemmaname}
    \theoremstyle{prop}
  \newtheorem{prop}[theorem]{\protect\propositionname}
    \theoremstyle{conj}
    \theoremstyle{question}
    \theoremstyle{cor}
  \newtheorem{cor}[theorem]{\protect\corname}
  \providecommand{\definitionname}{Definition}
  \providecommand{\examplename}{Example}
\providecommand{\theoremname}{Theorem}
\providecommand{\propositionname}{Proposition}
\providecommand{\conjname}{Conjecture}
\providecommand{\questionname}{Question}
\providecommand{\corname}{Corollary}
\newcommand{\T}{T}  			% This is the torus C^*N
\newcommand{\CStar}{{\mathbb{C}^*}}% This is the algebraic torus
\newcommand{\CStarN}{\left({\CStar}\right)^N}% This is the algebraic torus
\newcommand{\CStarNplusOne}{\left({\CStar}\right)^{N+1}}% This is the algebraic torus
\newcommand{\X}{X}
\newcommand{\linearf}{f} % A linear function on X.
\newcommand{\Hf}{H} % The zero set of the linear function \linearf
\newcommand{\Hone}{H_P^{(1)}}% Hyperplanes containing P
\newcommand{\Htwo}{H_P^{(2)}} 
\newcommand{\Hd}{H_P^{(d)}}
\newcommand{\Hk}{H_P^{(k)}}
\newcommand{\HkMinusOne}{H_P^{(k-1)}}
\newcommand{\ML}{ML}  %The ML Obstruction function. 
\DeclareMathOperator{\MLdeg}{MLdeg}   %ML degree
\newcommand{\Sa}{S_\alpha}
\newcommand{\SaBar}{\bar S_\alpha}
\newcommand{\Sb}{S_\beta}
\newcommand{\Srho}{S_\rho}
\newcommand{\Stau}{S_\tau}
\newcommand{\eab}{e_{\alpha,\beta}}
\newcommand{\mfS}{\mathfrak{S}}
\newcommand{\V}{V}
\DeclareMathOperator{\Flag}{Flag}   %Flags of whitney strata
\newcommand{\cF}{\mathcal F}  % One flag of whitney strata. 
\newcommand{\bC}{\mathbb C}
\DeclareMathOperator{\reg}{reg}
\begin{document}

%\title{\vspace{-4ex} The maximum likelihood degree of rank 2 matrices\\ via  Euler characteristics}
\title{\vspace{-4ex}  Computing Euler obstruction functions using maximum likelihood degrees}

%\author{Jose Israel Rodriguez and Botong Wang}
\author[Rodriguez]{Jose Israel Rodriguez}
\address{
Jose Israel Rodriguez\\
The University of Chicago\\
Dept. of Statistics\\
5734 S. University Ave.\\
Chicago, IL 60637.
%\text{Research supported in part by NSF grant DMS-1402545}
}
\email{joisro@Uchicago.edu}
\urladdr{http://home.uchicago.edu/~joisro}

\author[Wang]{Botong Wang}
\address{
Botong Wang\\
University of Wisconsin-Madison\\
 Department of Mathematics\\
Van Vleck Hall, 480 Lincoln Drive, 
Madison, WI}
\email{wang@math.wisc.edu}
\urladdr{http://www.math.wisc.edu/~wang/}

\maketitle\vspace{-4ex} 
\begin{abstract}
\noindent
We give a numerical algorithm computing Euler obstruction functions using maximum likelihood degrees. 
The maximum likelihood degree is a well-studied property of a variety in algebraic statistics and computational algebraic geometry. In this article we use this degree to give a new way to compute Euler obstruction functions. We define the maximum likelihood obstruction function and show how it coincides with the Euler obstruction function. 
With this insight, we are able to bring new tools of computational algebraic geometry to study Euler obstruction functions. 
%{\color{blue} %Still need to do this. Indeed, in our last section show how our insights lead to new discoveries through symbolic computation, numerical algorithms, and applied computation topology. }
\end{abstract}

\section{Introduction}\label{sectionintro}
%%%Just to get started feel free to comment out or expand upon. 
The topology of algebraic varieties is a well-studied subject in algebraic geometry. 
A celebrated result is the generalization of Chern classes to singular algebraic varieties by Macpherson, which confirmed a conjecture of Deligne and Grothendieck.
 In \cite{Mac74}, Macpherson defined Chern-Schwartz-Macpherson (CSM) classes by introducing the (local) Euler obstruction function. The Euler obstruction function is a canonically defined invariant attached to any complex algebraic or analytic variety. 
 By the formula in \cite[Section 3]{Mac74}, knowing the Euler obstruction of a variety is equivalent to knowing the Chern-Schwartz-Macpherson classes of the variety. %It was introduced in \cite{Mac74} by Macpherson to define Chern classes on singular varieties. 

Given an algebraic (or analytic) variety $X$, its Euler obstruction function $Eu_X$ is an integer valued function on $X$. It is constant on each stratum of any Whitney stratification, and hence a constructible function. Even though the definition of the $Eu_X$ is global, its value at a point $P\in X$ only depends on the analytic germ of $X$ at $P$. Thus, the Euler obstruction function is essentially a local invariant. In \cite{Mac74}, Macpherson defined  $Eu_X$ as an obstruction of extending a holomorphic function on the Nash blowup. Because of its abstract nature, it is hard to compute the value of the Euler obstruction using this definition. In \cite{BLS00}, an equivalent definition of $Eu_X$ is given using Euler characteristics of some complex links (see also \cite{Dim04}). To compute  $Eu_X$ using the second definition requires knowing a Whitney stratification and computing the Euler characteristics of various complex links (see Definition \ref{defnEuobfun}). 

The purpose of this paper is to introduce an algorithm to compute the value of the Euler obstruction function $Eu_X$ at any given point $P\in X$ using the coordinates of $P$ and the defining equations of the algebraic variety $X$ as an input. 
The algorithm does not require any knowledge of a Whitney stratification. 
We use numerical computation of maximum likelihood degrees of very affine varieties to determine the values of the Euler obstruction function. 
%This reduces computing the problem to computing 
Our algorithm computes
the number of critical points of a general monomial  on a sequence of varieties obtained from $X$ by intersecting and removing hyperplanes. 
Since the algorithm does not use the inclusion-exclusion principle, it is relatively effective and we compute examples in Section \ref{sec:examples}.
While our presentation is for very affine varieties, our methods can be generalized to affine varieties by applying a change of coordinates as seen in Example \ref{ex:umbrella}. 

\iffalse
{\color{magenta}
As mentioned, the Euler obstruction function is directly related to Chern-Schwartz-Macpherson (CSM) classes. The computation of these classes is an active area of research including works by \cite{Aluffi, Helmer, Safey, DiRocco, and Zhang}.
Our approach is different from every one of these methods and these differences are explained in  example \ref{}. 
For the curious reader, our current implementation can be found in a Macaulay2 package \cite{}. }
\fi

\subsection{Maximum likelihood obstruction function}
Let $\T=\CStarN$ be an affine complex torus with coordinates $z_1,\dots,z_N$. 
%$$\eta=\sum_{1\leq i\leq N}\mu_i\frac{dz_i}{z_i}$$
Let $X$ be a closed pure dimensional (not necessarily irreducible) subvariety of $T$.
Consider $\eta$, the (left) invariant holomorphic $1$-form on $T$ defined as 
$$\eta:=\mu_1\frac{dz_1}{z_1}+\mu_2\frac{dz_2}{z_2}+\cdots+\mu_N\frac{dz_N}{z_N},$$
where $\mu_i\in \bC$, %and $z_1,z_2,\dots,z_N$ %$(z_i)_{1\leq i\leq N}$ are the coordinates of $T=(\bC^*)^N$. 
and denote its restriction to $X$ by $\eta_X$. 
When the coefficients $\mu_i$ are general, 
 $\eta_\X$ degenerates at finitely many points and this number of points remains constant. 
We define the  \textbf{maximum likelihood degree} (ML degree) of $X$, denoted by $\MLdeg(\X)$, to be the number of degeneration points of $\eta_\X$ for general $\mu_i$. 
The notion of ML degree as first introduced in \cite{CHKS06,HKS05}.
For a more geometric interpretation of this definition, we refer to \cite{Huh13}.
Moreover, in \cite{FK00} the Gaussian degree is in some cases equivalent to the ML degree.
Our convention is that the ML degree of an empty set is zero. 

\begin{remark}
By general, we mean there exists a Zariski open dense subset of $(\mu_1,\dots,\mu_N)\in\mathbb{C}^N$ where the ML degree is constant.
\end{remark}

\begin{remark}
The definition of $\MLdeg(X)$ depends on the embedding of $X$ to $T$. However, we will see later that $\MLdeg(X)$ is indeed a stratified topological invariant of $X$, and hence does not depend on the embedding. 
\end{remark}

%Suppose we fix a set of coordinates $(z_1,z_2,\dots,z_N)$ of the torus $\T:=\CStarN$. 
%%%%%%%%%%%%%%JOSE REVISED 1
 Let $\linearf:=\sum_{1\leq i\leq N}a_iz_i+b$ denote a linear function on $\CStarN$,
where $a_i, b\in \bC$, and at least one of the $a_i$ is nonzero.
We define a hyperplane $\Hf$ in $\T$ to be the zero set of $\linearf$. 
 Then, we have a natural closed embedding of $T\setminus \Hf$ to $\CStarNplusOne$ given by 
$$(z_1, \ldots, z_N, f): T\setminus \Hf\to \CStarNplusOne.$$ 
For a closed subvariety $X$ of $T$ and a hyperplane $\Hf\subset T$, we define $\MLdeg(X\setminus \Hf)$ to be the maximum likelihood degree of $X\setminus \Hf$ as a closed subvariety of $\CStarNplusOne$ via the above embedding. 
%%%%%%%%%%%%%%

Let $\Hone, \ldots, \Hd$ denote general hyperplanes in $T$ passing through $P$.
\begin{definition}\label{def:Removal}
Let $X$ be a pure $d$-dimensional closed subvariety of $\T$, and let $P\in X$ be any closed point.
For  $k\in\{0,1,\dots,d+1\}$, define
%\item the  $k$th  \textbf{sectional ML degree with respect to} $P$ by $$s_k(P,X):=\MLdeg\left(X\cap \Hone\cap\Htwo\cap\dots\cap\Hk\right),$$
 the $k$-th \textbf{removal ML degree with respect to} $P$ by  
$$r_k(P,X):=\MLdeg\left(X\cap \Hone\cap\Htwo\cap\dots\cap\HkMinusOne\setminus\Hk\right).$$ 
Our convention  is that %$s_0(P,X)$ 
$r_0(P,X)=\MLdeg(X)$ and $r_1(P,X)=\MLdeg\left(X\setminus \Hone\right)$.
When $P,X$ are clear, we simply write $r_k$.% or $s_k$.
\end{definition}
\begin{remark}\label{generaldef}
The above definition still makes sense when $P\in (\bC^*)^N$ is not contained in $X$. We will use this generalized definition in the last section to improve computational performance when we determine the value of $Eu_X$ at different points of $X$. 
\end{remark}

\begin{definition}\label{def:MLOF}
We define the \textbf{maximum likelihood obstruction function} of $X$ at $P$ by
\begin{equation}\label{eq:MLobfun}
ML_X(P):
%=-\left((-1)^{d+1}r_0+  (-1)^{d} r_1+(-1)^{d-1} r_2+\cdots+(-1)^{1} r_{d}+(-1)^0r_{d+1}\right)\\&\,\,
=(-1)^d r_0+(-1)^{d-1} r_1+\cdots+r_d-r_{d+1}.
%\end{split}
\end{equation}
\end{definition}

\begin{remark}
The removal ML degrees and $ML_X$ exhibit these properties.
\begin{itemize}
\item If $X=\{P\}$ is a point, then $ML_X(P)=-((-1)^{1}1+(-1)^{0}0)=1$. 
\item We have $r_{d+1}(P,X)$ equals the degree of $X$ minus the Hilbert-Samuel multiplicity of $X$ at $P$.
%{\color{red} \item OLD: We have that $r_{d+1}(P,X)$ may be considered the as the difference $r_{d+1}=s_d-1.$
%In other words, $\MLdeg(X\cap \Hone\dots\cap \Hd\setminus\{P\})=s_d-1$. 
%}

\item If $X$ is smooth and $P\in X$ is any point, we can check that $ML_X(P)=1$ as follows. The variety $X$ and all the hyperplane sections $X\cap \Hone\cap\Htwo\cap\dots\cap\Hk$ are smooth. It follows by Theorem \ref{smooth}, $(-1)^d r_0=\chi(X)$, and 
$$(-1)^{d-k}r_k=\chi\left(X\cap \Hone\cap\Htwo\cap\dots\cap\HkMinusOne\setminus \Hk\right)$$
for any $k\in\{1,\dots,d+1\}$. Since Euler characteristic is additive on locally closed subvarieties, one can derive that $\ML_X(P)=\chi(X)-\chi(X\setminus P)=1$. 

\item The definition of $\ML_X(P)$ depends on the variety $X$ and its embedding to $T$. However, similar to the ML degrees, we will see later that $\ML_X(P)$ is a stratified topological invariant on the analytic germ of $X$ at $P$. 

\item The definitions of $r_k(P, X)$ and $ML_X(P)$ still make sense even if $P\in (\bC^*)^N$ is not contained in $X$. In fact, if $P\in (\bC^*)^N\setminus X$, then $ML_X(P)=0$. 
\end{itemize}
\end{remark}
%%%%%%%%%%%%%%%%%%%%%%%%%%%%%%%%%%%%%%%%%%%%%%%%%%%%%%%%

%{\color{red} For Jose, the natural thing to consider is the following
%\begin{equation}\label{}
%\begin{split}
%JoseML_X(P):=&
%\left(   
%(-1)^{d} s_0+(-1)^{d-1} s_1+(-1)^{d-2} s_2+\cdots+(-1)^{1} s_{d-1}+(-1)^{0} s_{d}
%\right)+\\
%&
%-\left(   
%(-1)^{d} r_1+(-1)^{d-1} r_2+(-1)^{d-2} r_3+\cdots+(-1)^{1} r_{d}+(-1)^{0} r_{d+1}
%\right)
%\end{split}
%\end{equation}
% %%
%}

%\begin{remark}
%{\color{red} Discuss the degenerate cases of empty set a point and the entire space. }
%\end{remark}
Our main result is the following. 

\begin{theorem}\label{mainthm}
Let $X$ be a pure dimensional closed subvariety of $T=\CStarN$. Then, the Euler obstruction function is equal to the maximum likelihood obstruction function, i.e.,
$$E_X(P)=\ML_X(P)$$
for any point $P\in X$. 
\end{theorem}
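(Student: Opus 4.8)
The plan is to argue by induction on $d=\dim X$, with base case $d=0$: there $X$ is a finite set, $Eu_X(P)=1$, and the definition gives $\ML_X(P)=r_0-r_1=\MLdeg(X)-\MLdeg(X\setminus\Hone)=\#X-(\#X-1)=1$ since a general $\Hone$ through $P$ meets $X$ only at $P$. For the inductive step the goal is to establish the recursion
\[
\ML_X(P)-\ML_{X\cap\Hone}(P)=\int_{X\cap\Hone}\left(Eu_X\big|_{X\cap\Hone}-Eu_{X\cap\Hone}\right)d\chi,
\]
where $Eu_X\big|_{X\cap\Hone}$ is the restriction of the Euler obstruction of $X$ and $Eu_{X\cap\Hone}$ is the intrinsic Euler obstruction of the section. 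Granting this and the inductive hypothesis $\ML_{X\cap\Hone}(P)=Eu_{X\cap\Hone}(P)$ (applied to the $(d-1)$-dimensional variety $X\cap\Hone$), the proof reduces to evaluating the right-hand integral and checking it equals $Eu_X(P)-Eu_{X\cap\Hone}(P)$.

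The engine of the whole argument is the extension of Theorem~\ref{smooth} from the smooth to the singular case: for \emph{any} pure-dimensional very affine variety $Y$ one should have $\MLdeg(Y)=(-1)^{\dim Y}\int_Y Eu_Y\,d\chi$, the Euler characteristic of $Y$ weighted by its own Euler obstruction. I would prove this by identifying the degeneration locus of a general $\eta_Y$ with the intersection of the graph of $\eta$ and the conormal variety $\overline{T^*_{Y_{\mathrm{reg}}}\T}$, which is exactly the characteristic cycle of $Eu_Y$; the Kashiwara--Dubson microlocal index formula then equates the signed count with the weighted Euler characteristic. The smooth case is Theorem~\ref{smooth}; the very affine (torus) hypothesis, following \cite{Huh13}, is what guarantees a general $\eta$ has no degenerations at infinity, so the count is finite and the index formula applies. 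Feeding this engine into the alternating sum, using additivity of $\chi$ on the decomposition $X=(X\setminus\Hone)\sqcup(X\cap\Hone)$ and the reindexing $r_k(P,X\cap\Hone)=r_{k+1}(P,X)$ valid for $k\ge 1$, a short computation collapses the alternating sum to the displayed recursion.

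It remains to evaluate the integral. Here genericity of $\Hone$ among hyperplanes through $P$ is decisive: the linear system of hyperplanes through $P$ has base locus $\{P\}$, so by Bertini a general member is transverse to every Whitney stratum of $X$ away from $P$. Since transverse slicing preserves the local Euler obstruction (the germ of $X\cap\Hone$ at a transverse point has the same normal slice as $X$), the integrand $Eu_X\big|_{X\cap\Hone}-Eu_{X\cap\Hone}$ vanishes identically off $P$, so the $d\chi$-integral is supported at the single point $P$ and equals its value there, namely $Eu_X(P)-Eu_{X\cap\Hone}(P)$. This is precisely where the complex-link description of $Eu_X$ from \cite{BLS00} (Definition~\ref{defnEuobfun}) enters, since transverse-slice invariance of the Euler obstruction is a consequence of that local characterization. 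Substituting back into the recursion and invoking the inductive hypothesis closes the induction: $\ML_X(P)=Eu_X(P)$.

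The main obstacle I expect is the singular case of the engine: showing $\MLdeg(Y)=(-1)^{\dim Y}\int_Y Eu_Y\,d\chi$ requires controlling the limiting behaviour of a general $\eta$ along the singular strata and at infinity, i.e.\ justifying the characteristic-cycle and microlocal-index interpretation for \emph{non-compact} very affine varieties rather than the naive unweighted additivity over strata (which would give the wrong answer). The secondary difficulty is the transversality input in the last step: one must ensure, via a Lê--Teissier-type analysis of generic hyperplane sections of Whitney stratified sets, that a general $\Hone$ through $P$ is genuinely transverse to all strata except at $P$, so that the Euler-obstruction difference localizes cleanly and the iterated sections behave as required throughout the induction.
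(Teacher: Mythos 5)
Your proposal is correct, and it reaches the theorem by a genuinely different route than the paper. The paper never inducts on hyperplane sections of $X$ itself: it shows that $ML_X$ satisfies the stratification-poset recursion \eqref{inductive} of Proposition \ref{prop:inductive} (whose unique solution with the point base case is $E_X$, by Remark \ref{sufficient}), by substituting the stratified index formula \eqref{indexthm} of \cite{RW17}, namely $\chi(\Stau)=\sum_\alpha e_{\alpha,\tau}\MLdeg\left(\overline{S}_\alpha\right)$, into the alternating sum, together with the sign change of pair obstructions under generic slicing (Proposition \ref{prophyperplane}) and additivity of $\chi$. You instead induct on $\dim X$ via one generic hyperplane through $P$, and your engine is the ``dual'' form of the same microlocal index theorem: $\MLdeg(Y)=(-1)^{\dim Y}\int_Y Eu_Y\,d\chi$, obtained from $CC(Eu_Y)=\pm\left[\overline{T^*_{Y_{\mathrm{reg}}}T}\right]$ (MacPherson/Kashiwara) plus the Gauss-degree theorem of \cite{FK00}; this statement is equivalent to \eqref{indexthm} by inverting the triangular matrix of pair obstructions, but it is used quite differently. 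Your collapse of the alternating sum via $r_k(P,X\cap \Hone)=r_{k+1}(P,X)$ for $k\geq 1$, giving $ML_X(P)-ML_{X\cap \Hone}(P)=(-1)^d\left[\MLdeg(X)-\MLdeg(X\setminus \Hone)+\MLdeg(X\cap \Hone)\right]$, is correct, and your localization of $Eu_X|_{X\cap \Hone}-Eu_{X\cap \Hone}$ at $P$ is exactly the transverse-slice invariance of the local Euler obstruction --- the same fact the paper packages as $V(\Sa\cap H,\Sb\cap H)=V(\Sa,\Sb)$ in the proof of Proposition \ref{prophyperplane}. What your route buys: the intermediate formula $\MLdeg(X)=(-1)^d\int_X Eu_X\,d\chi$ is of independent interest (it makes the topological invariance of ML degrees manifest), and the final recursion never mentions a chosen Whitney stratification, only the transversality argument does. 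What the paper's route buys: it invokes no characteristic-cycle machinery beyond the already-published \eqref{indexthm} --- consistent with the paper's deliberate choice to define Euler obstructions via complex links rather than characteristic cycles --- and its recursion matches the recursive characterization of $E_X$ verbatim, so no separate localization lemma is needed. Both proofs ultimately rest on the same index theorem for constructible functions on the torus; the difference is which of its two equivalent incarnations drives the induction.
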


\begin{remark}
In the paper \cite{ST10}, the global Euler obstruction of a constructible complex is computed using some similar constructions. However, \cite{ST10} is more of a global nature. Suggested by a note of Sch\"urmann, our results can also be proved using Kashiwara's local index theorem. We will pursue this approach in another occasion. 
\end{remark}

\subsection*{Acknowledgement}
We thank Xiping Zhang for helpful conversations. We also appreciate a detailed note from J\"org Sch\"urmann explaining a different approach of our result.

%%%%%%%%%%%%%%%%%%%%%%%%%%%%%%%%%%%%%%%%%%%%%%%%%%%%%%%%%%%%%%%%%%%%%%%%%%%%%%%%%%%%%%%%%%%%%%%%%%%%

\section{Euler obstruction function}
In this section we define the Euler obstruction function and emulate notation from  Dimca's book \cite{Dim04}, where more details can be found. %{\color{red} Citation missing.}

\subsection{Euler obstruction of a pair}
Let $X$ be a pure dimensional complex algebraic variety, and let 
$\mfS_\Lambda(X):=\{S_\lambda\}_{\lambda\in \Lambda}$ 
be a Whitney stratification of $X$. 
Given any two strata $\Sa$ and $\Sb$, one can define an integer $\eab$, called the \emph{Euler obstruction of the pair}. Informally, this number measures the complexity of the singularity of $\Sb$ along $\Sa$. Instead of defining Euler obstruction of  a pair using characteristic cycles (see \cite[Section 2.3]{RW17} and \cite[Section 1.1]{EM99}), we will use a theorem of Kashiwara, and define the Euler obstruction of a pair using the Euler characteristic of a complex link. 

First, we define the Euler characteristic of the complex link $\V(\Sa, \Sb)$ as follows. 
%{\color{red}In the itemized list below it seems that in the second item $\V(\Sa,\Sb)$ does not depend on $\Sb$. Botong provided a correction but wanted to check Dimca's book. }
\begin{itemize}
\item If $S_\alpha\not\subset \partial \Sb$, then $V(S_\alpha, \Sb)=0$, where $\partial \Sb$ denotes the boundary of $\Sb$ in $X$. 
\item If $S_\alpha\subset \partial \Sb$, we fix a point $P\in S_\alpha$, and fix an embedding of a neighborhood of $X$ at $P$ into some complex vector space. Let $B$ be a sufficiently small ball centered at $P$. 
Let $L$ be a 
subspace of dimension $\Sa+1$ in general direction, which does not contain $P$ but sufficiently close (relative to the radius of the ball$B$).
%Perturb $L$ slightly (relative to the radius of $B$), and we obtain an affine subspace $L'$. Then we define 
Then,
$$V(\Sa, \Sb):=\chi(\Sb\cap B\cap L).$$
\end{itemize}
The homotopy type of $\Sb\cap B\cap L$, and hence the value of $V(\Sa, \Sb)$, does not
 depend on the choice of $P$, $B$ and $L$. 
\begin{remark}
We use a slightly different notation as in \cite[Page 100]{Dim04}. Our $V(\Sa, \Sb)$
 is their $V^{\dim \Sa+1}(\Sb, \Sa)$. 
\end{remark}

We use the following result of Kashiwara to define the Euler obstructions and avoid introducing characteristic cycles. 
\begin{definition}\label{def:Kashiwara}
\cite[Theorem 2.10]{RW17}
The Euler obstruction of the pair $(\Sa, \Sb)$ of Whitney strata, denoted by $e_{\alpha, \beta}$, is defined as follows. 
\begin{itemize}
\item If $\Sa\not\subset \bar{S}_\beta$, then $e_{\alpha, \beta}=0$, where $\bar{S}_\beta$ denotes the closure of $\Sb$ in $X$. 
\item If $\Sa=\Sb$, then $e_{\alpha,\beta}=(-1)^{\dim \Sa}$.
\item If $\Sa\subset \partial \Sb$, $e_{\alpha, \beta}=(-1)^{\dim \Sa+1}V(\Sa, \Sb)$. 
\end{itemize}

\end{definition}

\subsection{Euler obstruction functions and inductive formulas}
In this section, we recall the definition of the Euler obstruction and we give a new inductive formula to compute its values.

\begin{definition}\label{defnEuobfun}
Let $X$ be a pure dimensional complex algebraic variety, and let $\mfS_\Lambda=\{S_\lambda\}_{\lambda\in \Lambda}$ be a Whitney stratification of $X$. Let $\Srho$ denote the strata containing $P$ and let $\Stau$ denote the strata $X_{reg}$.
 %such that $S_1=X_{\reg}$ be the smooth locus of $X$. 
 %For a point $P\in X$, let $S_{0}$ be the stratum containing $P$. 
Denote by $\Flag(P, X)$ the set of all flags of strata of $\mfS_\Lambda$,
$$\cF=(\Srho=S_{i_1}, S_{i_2}, \ldots, S_{i_{p-1}}, S_{i_p}=\Stau),$$
such that $S_{i_j}\subset \partial S_{i_{j+1}}$ for $1\leq j\leq p-1$.
% \begin{itemize}
%\item  $S_{i_j}\subset \partial S_{i_{j+1}}$ for $1\leq j\leq p-1$,
%\item $S_{i_1}$ is the strata containing $P$, and
%\item $S_{i_p}$ is the strata $X_{reg}$.
%\end{itemize}
%To avoid a confusing use of indices, let $\Srho$ denote the strata containing $P$ and let $\Stau$ denote the strata $X_{reg}$.
We define
$$\V(\cF):=\prod_{1\leq j\leq p-1}V(S_{i_j}, S_{i_{j+1}})$$
and
$$E_X(P):=\sum_{\cF\in \Flag(P, X)}\V(\cF).$$
The function $E_X$ is called the \textbf{Euler obstruction function} of $X$. 
\end{definition}
\begin{remark}
It follows from Macpherson's original definition that the Euler obstruction function is independent of the choice of the Whitney stratification. 
\end{remark}
%Therefore it makes sense to use the following notation
%$$
%\Flag(P,X):=\Flag(P, X_{reg},\mfS_\Lambda).
%$$

By the definition of $Eu_X$, we have two inductive formulas to compute its value. 
If we group up all the flags of $\Flag(P,X)$  %%Jose does not like. 
%$\cF=(S_0=S_{i_1}, S_{i_2}, \ldots, S_{i_{p-1}}, S_{i_p}=S_1)$ 
with the same $S_{i_2}$, we have the following inductive formula (see \cite[Proposition 4.1.37(iii)]{Dim04}):
$$E_X(P)=\sum V(\Srho, \Sa)E_X(P_\alpha)=(-1)^{\dim \Srho+1}\sum e_{\rho,\alpha}E_X(P_\alpha)$$
where $P_\alpha$ is any point in $\Sa$ and the sum is over all $\alpha\in \Lambda$ such that $\Srho\subset \partial \Sa$. 
On the other hand, if we group up all the flags of $\Flag(P,X)$ 
%$\cF=(S_1=S_{i_1}, S_{i_2}, \ldots, S_{i_{p-1}}, S_{i_p}=S_0)$ 
with the same $S_{i_{p-1}}$, we have a different inductive formula as follows,
\begin{equation}\label{inductive0}
E_X(P)=\sum E_{\SaBar}(P)\V(\Sa, \Stau)=\sum E_{\SaBar}(P)(-1)^{\dim \Sa+1}e_{\alpha, \tau},
\end{equation}
where $\overline{S}_\alpha$ is the closure of $\Sa$ in $X$ and the sum is over all $\alpha\in \Lambda\setminus \{\tau\}$ such that $\Srho\subset \SaBar$. If we let $E_{\SaBar}(P)=0$ for $P\notin \SaBar$, then we have the following proposition.
\begin{prop}\label{prop:inductive}
Under the above notations, we have
\begin{equation}\label{inductive}
E_X(P)=\sum_{\alpha\in \Lambda\setminus\{\tau\}} E_{\overline{S}_\alpha}(P)(-1)^{\dim \Sa+1}e_{\alpha, \tau}.
\end{equation}

\end{prop}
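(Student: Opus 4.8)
The plan is to deduce \eqref{inductive} directly from the already-established formula \eqref{inductive0} by enlarging the index set of the summation and checking that the newly added terms all vanish under the stated convention. Recall that \eqref{inductive0} expresses $E_X(P)$ as a sum of $E_{\SaBar}(P)(-1)^{\dim \Sa+1}e_{\alpha,\tau}$ over exactly those $\alpha\in\Lambda\setminus\{\tau\}$ satisfying $\Srho\subset\SaBar$, whereas the claimed formula \eqref{inductive} ranges over \emph{all} $\alpha\in\Lambda\setminus\{\tau\}$. So it suffices to show that every term indexed by an $\alpha$ with $\Srho\not\subset\SaBar$ contributes zero, after which the two sums agree term by term.

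The key point is the equivalence $\Srho\subset\SaBar \iff P\in\SaBar$. Since $\{S_\lambda\}_{\lambda\in\Lambda}$ is a Whitney stratification, each closure $\SaBar$ is a union of strata by the frontier condition, and $\Srho$ is by definition the unique stratum containing $P$; hence $P\in\SaBar$ forces the whole stratum $\Srho$ into $\SaBar$, and the converse is immediate. Therefore $\Srho\not\subset\SaBar$ is equivalent to $P\notin\SaBar$, and for such $\alpha$ the convention $E_{\SaBar}(P)=0$ (fixed just before the statement) makes the corresponding summand vanish. Combining this with \eqref{inductive0}, the extended sum in \eqref{inductive} picks up only zero terms beyond those already present in \eqref{inductive0}, which yields the desired identity.

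The only substantive ingredient is the set-theoretic equivalence in the second paragraph, which rests squarely on the frontier property of Whitney stratifications; everything else is bookkeeping with the convention $E_{\SaBar}(P)=0$ for $P\notin\SaBar$. I do not expect a genuine obstacle here, since the content of the proposition is a deliberate repackaging of \eqref{inductive0} so that later summations may range uniformly over all strata $\alpha\in\Lambda\setminus\{\tau\}$ without having to carry the containment hypothesis $\Srho\subset\SaBar$ explicitly.
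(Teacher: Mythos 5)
Your proof is correct and follows essentially the same route as the paper: the proposition there is obtained precisely by extending the sum in \eqref{inductive0} from the strata with $\Srho\subset\SaBar$ to all $\alpha\in\Lambda\setminus\{\tau\}$ via the convention $E_{\SaBar}(P)=0$ for $P\notin\SaBar$. Your only addition is to make explicit the equivalence $\Srho\subset\SaBar\iff P\in\SaBar$ via the frontier condition, which the paper leaves implicit.
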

\begin{remark}\label{sufficient}
One can use induction on the dimension of $X$ to show that $E_X$ can be determined uniquely by  formula (\ref{inductive}) and that $E_X(P)=1$ if $X=\{P\}$ is a point.
\end{remark}

%The \textbf{Euler obstruction function} $E_X$ on $X$ is defined inductively as follows. 
%\begin{itemize}
%\item If $P\in X$ is a smooth point, then $E_X(P)=1$.
%\item Suppose $P\in S_\lambda$. Then we define
%$$E_X(P)=\sum (-1)^{\dim S_\lambda+1}e_{\lambda, \lambda'}E_{\overline{S_{\lambda'}}}(P_{\lambda'})$$
%where the sum is over all $\lambda'\in \Lambda$ such that $S_\lambda$ is on the boundary of $S_{\lambda'}$, $e_{\lambda, %\lambda'}$ is the Euler obstruction of the pair $(S_\lambda, S_{\lambda'})$ and $P_{\lambda'}$ is any point in $S_{\lambda'}$
%\end{itemize}

%{\color{magenta}
%Macpherson introduced Euler obstruction function to define nowadays called Chern-Schwartz-Macpherson class. }

%%%%%%%%%%%%%%%%%%%%%%%%%%%%%%%%%%%%%%%%%%%%%%%%%
%%%%%%%%%%%%%%%%%%%%%%%%%%%%%%%%%%%%%%%%%%%%%%%%%%%%%%%%%%%%%%%%%%%%%%%%%%%%%%%%%%%%%%%%%%%%%%%%%%%%

\section{Topological aspect of maximum likelihood degree}
Throughout this section we assume that $T=\CStarN$ is an affine torus and $X\subset T$ is a pure $d$-dimensional closed subvariety. 

\begin{theorem}\label{smooth}\cite[Theorem 1]{Huh13}
When $X$ is smooth, $\MLdeg(X)$ is equal to the signed Euler characteristic of $X$, i.e.,
$$\MLdeg(X)=(-1)^{d}\chi(X).$$
\end{theorem}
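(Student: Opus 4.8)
The plan is to interpret $\MLdeg(\X)$ as the number of zeros of a generic section of a logarithmic cotangent bundle and then evaluate that count via Chern classes. First I would record that $\eta$ is exactly $d\log$ of the monomial $z_1^{\mu_1}\cdots z_N^{\mu_N}$, so the forms $dz_1/z_1,\dots,dz_N/z_N$ constitute a basis of the global sections of the (trivial) logarithmic cotangent bundle $\Omega^1_{\T}(\log\partial \T)$ of the torus, where $\partial \T$ is the toric boundary in any smooth toric compactification. Restricting to $\X$, the degeneration points of $\eta_X$ are precisely the zeros of $\eta_X$ viewed as a section of $\Omega^1_{\X}$; since $\X$ is smooth, $\Omega^1_{\X}$ is a genuine rank-$d$ vector bundle, and the count of these zeros for general $\mu$ is what must be computed. (By additivity of both sides over connected components we may assume $\X$ is connected.)

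Second, I would pass to a smooth compactification. By resolution of singularities choose a smooth projective $\bar X\supseteq \X$ whose boundary $D=\bar X\setminus \X$ is a simple normal crossing divisor. The crucial structural fact is that $\eta_X$, being the restriction of a form with only logarithmic poles along the toric boundary, extends to a global section of the logarithmic cotangent bundle $\Omega^1_{\bar X}(\log D)$, a rank-$d$ vector bundle on $\bar X$. Thus the whole family $\{\eta_X:\mu\in\bC^N\}$ becomes a linear system of sections of $\Omega^1_{\bar X}(\log D)$.

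Third, and this is where I expect the real work to lie, I would establish the two genericity statements that make the zero count equal to the top Chern number: for general $\mu$, the section $\eta_X$ has no zeros on the boundary $D$, and it vanishes transversally at finitely many points of $\X$. Both reduce to a Bertini-type argument, but its hypothesis must be verified: one needs the evaluation map $\bC^N\to \Omega^1_{\bar X}(\log D)|_x$ to be surjective (base-point freeness) at every point of $\bar X$, the boundary included. On the open part $\T$ this is immediate because $dz_1/z_1,\dots,dz_N/z_N$ trivialize $\Omega^1_{\T}(\log\partial \T)$; along $D$ one must analyze the rational logarithmic Gauss map $\gamma$ sending a point to the image of its logarithmic tangent space in the Grassmannian $\mathbb{G}(d,N)$, and show that a generic $\mu$ annihilates $\gamma(x)$ for no boundary point $x$. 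This boundary analysis is the main obstacle, and it is the place where the smoothness of $\X$ and the transversality of $D$ are genuinely used.

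Finally, granted base-point freeness and transversality, Poincar\'e--Hopf for the bundle $\Omega^1_{\bar X}(\log D)$ gives
$$\MLdeg(\X)=\#\{\eta_X=0\}=\int_{\bar X}c_d\bigl(\Omega^1_{\bar X}(\log D)\bigr).$$
I would then invoke the logarithmic Gauss--Bonnet formula, that the top logarithmic Chern number of a smooth pair $(\bar X,D)$ computes the Euler characteristic of the complement (equivalently, $c_d$ of the log tangent bundle $T_{\bar X}(-\log D)$ integrates to $\chi(\bar X\setminus D)$, and dualizing a rank-$d$ bundle multiplies $c_d$ by $(-1)^d$):
$$\int_{\bar X}c_d\bigl(\Omega^1_{\bar X}(\log D)\bigr)=(-1)^d\,\chi(\bar X\setminus D)=(-1)^d\,\chi(\X),$$
which yields the claim. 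An alternative to the final two steps, bypassing the explicit transversality check, is a Morse-theoretic route: for real generic $\mu$ the single-valued function $\sum_i \mu_i\log|z_i|$ on $\X$ has critical points exactly at the zeros of $\eta_X$, and being the real part of a holomorphic Morse function it has all critical points of index $d$, so that $\chi(\X)=(-1)^d\cdot\#\{\text{critical points}\}$. Since this version still requires controlling behavior at infinity, it shares the same core difficulty as step three, and I would adopt whichever formulation proves cleaner once the boundary analysis is pinned down.
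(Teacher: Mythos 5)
The paper offers no proof of this statement: Theorem \ref{smooth} is imported verbatim from \cite[Theorem 1]{Huh13}, so there is no internal argument to compare against. Your outline is, in substance, the argument of that cited reference: compactify to a smooth projective $\bar X$ with simple normal crossing boundary $D$, note that each $d\log z_j$, hence $\eta_X$, extends to a section of $\Omega^1_{\bar X}(\log D)$, prove that for general $\mu$ this section has no zeros on $D$ and only nondegenerate zeros on $X$, and finish with $\int_{\bar X} c_d\bigl(\Omega^1_{\bar X}(\log D)\bigr)=(-1)^d\chi(X)$ (logarithmic Gauss--Bonnet). So the strategy is sound and is the standard one; the only real issue is that your step three, which you correctly flag as the crux, is left as a statement of what must be shown, and the tool you propose for it is not quite the right one.

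Aiming for base-point freeness of $\Omega^1_{\bar X}(\log D)$ at boundary points is both more than is needed and not in general available; what closes the gap is a residue computation. Near a point of $D$ one can write $z_j=(\mathrm{unit})\cdot\prod_i t_i^{a_{ij}}$, where the $t_i$ cut out the components $D_i$ through the point and $a_{ij}=\mathrm{ord}_{D_i}(z_j)$; consequently the residue of $\eta_\mu$ along $D_i$ is the \emph{constant} $\sum_j \mu_j a_{ij}$, and any zero of $\eta_\mu$ at a point of $D_i$ would force this constant to vanish. Moreover each weight vector $(a_{ij})_j$ is nonzero: if every $z_j$ were a unit along $D_i$, the map $(z_1,\dots,z_N)$ would be defined at the generic point of $D_i$ and would send it into $\overline{X}\cap T=X$ (here $\overline{X}$ is the closure in $(\bP^1)^N$, and $X$ is closed in $T$), so the valuation $\mathrm{ord}_{D_i}$ would have two distinct centers on $\bar X$ --- the divisor $D_i$, which is disjoint from $X$, and a point of $X$ --- contradicting separatedness. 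Thus the bad parameters lie in a finite union of proper hyperplanes $\{\mu:\sum_j \mu_j\,\mathrm{ord}_{D_i}(z_j)=0\}$, one for each boundary component, and for $\mu$ outside this union all zeros of $\eta_\mu$ lie in $X$, where your Bertini/global-generation argument gives finiteness and transversality. With that insertion your proof is complete and coincides with the proof in the source the paper cites.
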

When $X$ is singular, one needs to include correction terms to the above equation. 
What these correction terms are is the content of the next theorem. 
\begin{theorem}\cite[Corollary 2.8]{RW17}
Let $\bigsqcup_{\alpha\in \Lambda}\Sa$ be a Whitney stratification of $X$, such that $\Stau=X_{\reg}$ is the smooth locus of $X$. Then, \begin{equation}\label{indexthm}
\chi(\Stau)=\sum_{\alpha\in\Lambda}e_{\alpha,\tau}\MLdeg\left(\overline{S}_\alpha\right)
\end{equation}
where $e_{\alpha,\tau}$ is the Euler obstruction of the pair $(\Sa, \Stau)$ and $\overline{S}_\alpha$ is the closure of $\Sa$ in $T$, which is a closed subvariety of pure dimension. 
\end{theorem}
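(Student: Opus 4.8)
The plan is to move both sides of \eqref{indexthm} into the cotangent bundle $T^*\T$, where each becomes an intersection number against one fixed generic section, and then match coefficients. Write $\Lambda_{\SaBar}:=\overline{T^*_{\Sa}\T}\subset T^*\T$ for the conormal variety of the stratum $\Sa$, an $N$-dimensional (Lagrangian) cycle, and let $s_\eta\colon\T\to T^*\T$ be the section defined by a generic left-invariant $1$-form $\eta$; since $\eta$ is invariant it is closed, which is what makes $s_\eta$ behave well near infinity.

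First I would rewrite each $\MLdeg(\SaBar)$ as an intersection number. A smooth point $x\in(\SaBar)_{\reg}$ is a degeneration point of $\eta_{\SaBar}$ precisely when $\eta$ is conormal to $\SaBar$ at $x$, i.e. when $s_\eta(x)\in T^*_{(\SaBar)_{\reg}}\T$; hence the degeneration locus is $s_\eta(\T)\cap\Lambda_{\SaBar}$. For generic $\eta$ this intersection is finite, transverse, and avoids the lower strata (the part of $\Lambda_{\SaBar}$ over the singular locus has dimension $<N$, so a generic $N$-dimensional section misses it), giving $\MLdeg(\SaBar)=\Lambda_{\SaBar}\cdot s_\eta$.

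Next I would apply the Dubson--Kashiwara microlocal index theorem --- in the very affine setting, the Franecki--Kapranov version for the torus --- which says that for a constructible function $\phi$ on $\T$ with characteristic cycle $CC(\phi)=\sum_\alpha m_\alpha[\Lambda_{\SaBar}]$ and generic $\eta$ one has $CC(\phi)\cdot s_\eta=\int_\T\phi\,d\chi$. Taking $\phi=\mathbf 1_{\Stau}$, the indicator of the open smooth stratum, and using $\chi=\chi_c$ for complex algebraic varieties, this reads $\chi(\Stau)=CC(\mathbf 1_{\Stau})\cdot s_\eta$, which is the left-hand side of \eqref{indexthm}. As a sanity check, in the smooth case $CC(\mathbf 1_X)=(-1)^d[\Lambda_X]$ and $\Lambda_X\cdot s_\eta=\MLdeg(X)=(-1)^d\chi(X)$ are consistent with Theorem \ref{smooth}.

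It remains to identify the coefficients $m_\alpha$ of $CC(\mathbf 1_{\Stau})$ with the pairing numbers $e_{\alpha,\tau}$, which is exactly the content encoded in Definition \ref{def:Kashiwara}: the multiplicity of $[\Lambda_{\SaBar}]$ in the characteristic cycle of $\mathbf 1_{\Stau}$ is $(-1)^{\dim\Sa+1}V(\Sa,\Stau)=e_{\alpha,\tau}$ (with leading term $e_{\tau,\tau}=(-1)^d$), so $CC(\mathbf 1_{\Stau})=\sum_\alpha e_{\alpha,\tau}[\Lambda_{\SaBar}]$. Substituting the two intersection-number identities then yields
\[
\chi(\Stau)=CC(\mathbf 1_{\Stau})\cdot s_\eta=\sum_\alpha e_{\alpha,\tau}\bigl(\Lambda_{\SaBar}\cdot s_\eta\bigr)=\sum_\alpha e_{\alpha,\tau}\MLdeg(\SaBar),
\]
which is \eqref{indexthm}. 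The main obstacle is the index-theorem step: one must justify that for generic invariant $\eta$ the section $s_\eta$ meets every relevant Lagrangian properly and that the resulting count genuinely equals the Euler integral with no correction from the noncompactness of $\T$. This is precisely where translation-invariance of $\eta$ and the very affine hypothesis are indispensable, and it underpins both reinterpretations above.
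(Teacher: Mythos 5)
The paper contains no proof of this statement to compare against: it is quoted directly from \cite{RW17} (Corollary 2.8), and is used in this paper as a black box. Your sketch is, in outline, precisely the argument behind the cited result: interpret $\MLdeg\left(\overline{S}_\alpha\right)$ as the intersection number of the conormal Lagrangian $\Lambda_{\overline{S}_\alpha}$ with the section of $T^*T$ given by a generic invariant form (Huh's geometric reading of the ML degree, cf.\ \cite{Huh13}); expand the characteristic cycle of $\mathbf{1}_{\Stau}$ with multiplicities $e_{\alpha,\tau}$, the identification of those multiplicities with the complex-link numbers being Kashiwara's local index theorem, i.e.\ exactly what Definition \ref{def:Kashiwara} (\cite{RW17}, Theorem 2.10) encodes; and evaluate both sides against $s_\eta$ via the noncompact microlocal index theorem of Franecki--Kapranov \cite{FK00}, which is what rules out corrections coming from the noncompactness of the torus. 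So your proposal is correct as an outline, with the caveat that its three load-bearing steps (transversality of $s_\eta$ against every conormal variety for a merely \emph{invariant} generic form, the identification $m_\alpha=e_{\alpha,\tau}$, and the boundary-free index formula) are themselves the cited theorems rather than things you prove; you have reconstructed the proof from the cited reference rather than found a different route, and your sign check against Theorem \ref{smooth} confirms the conventions match.
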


Since $\overline{S}_\tau=X$ and since $e_{\tau,\tau}=(-1)^{d}$, equation (\ref{indexthm}) simplifies to 
\begin{equation}\label{MLdegEu}
(-1)^d\MLdeg(X)=\chi(\Stau)-\sum_{\alpha\in\Lambda\setminus \{\tau\}}e_{\alpha,\tau}\MLdeg\left(\overline{S}_\alpha\right).
\end{equation}

Since the Euler obstruction function does not depend on the choice of the Whitney stratification, we can choose the Whitney stratification $X=\bigsqcup_{\alpha\in \Lambda} S_\lambda$, such that $\Stau=X_{\reg}$ is the smooth locus of $X$ and $\Srho=\{P\}$ is a fixed point in $X$. 

Before proving Theorem \ref{mainthm}, we recall some standard facts about general hyperplane sections. 

\begin{prop}\label{prophyperplane}
Suppose $H\subset T$ is a general hyperplane passing through $P$. Then, the following statements hold. 
\begin{itemize}
\item If $\dim \Sa\geq 1$, then $\dim (\Sa\cap H)=\dim \Sa-1$. If $\dim \Sa=0$ and $\alpha\neq \rho$, then $\Sa\cap H=\emptyset$. 
\item The stratification $\bigsqcup_{\alpha\in \Lambda} (\Sa\cap H)$ of $X\cap H$ is a Whitney stratification.
\item If $\Sa\cap H\neq \emptyset$ and if $\alpha\neq \rho$, then the Euler obstruction of the pair $(\Sa\cap H, \Sb\cap H)$ is equal to the negative of the Euler obstruction of the pair $(\Sa, \Sb)$. In other words,
$$e_{(\Sa\cap H, \Sb\cap H)}=-e_{(\Sa, \Sb)}.$$%{\color{red}We have to be careful about\; e_\alpha\cap H=\emptyset.}
\end{itemize}
\end{prop}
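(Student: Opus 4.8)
The plan is to treat the three statements in turn; the first two are standard transversality facts and the third carries the real content. Throughout, set $a=\dim\Sa$ and work with the linear system of hyperplanes through $P$: since a hyperplane in $T\subset\mathbb{A}^N$ is cut out by an affine-linear form, the subsystem of forms vanishing at $P$ is a hyperplane in coefficient space whose base locus in $T$ is exactly $\{P\}$. For the first statement I would invoke the usual Bertini/Kleiman dimension count for this system: because the base locus is the single point $P$, a general member $H$ contains no stratum $\Sa$ of positive dimension, so $\dim(\Sa\cap H)=a-1$. For a $0$-dimensional stratum with $\alpha\neq\rho$, the finitely many points of $\Sa$ differ from $P$, and a general $H$ through $P$ avoids any fixed finite set disjoint from its base locus, giving $\Sa\cap H=\emptyset$; the only stratum forced into $H$ is $\Srho=\{P\}$, which survives as a $0$-dimensional stratum.

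For the second statement the key point is that a general $H$ through $P$ is transverse to every stratum $\Sa$ with $\alpha\neq\rho$: transversality to a fixed stratum is an open dense condition on the linear system, and there are only finitely many strata. Granting transversality, I would cite the standard fact that a transverse hyperplane section of a Whitney stratified set is again Whitney stratified, with induced strata $\Sa\cap H$ (the transversality results of stratified Morse theory). The stratum $\Srho=\{P\}$ lies in $H$ but, being a point, causes no difficulty.

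The third statement is the crux, and I would prove it by reducing the sign change to an equality of complex links. Recall $\eab=(-1)^{a+1}\V(\Sa,\Sb)$ when $\Sa\subset\partial\Sb$ (the cases $\Sa=\Sb$ and $\Sa\not\subset\overline{\Sb}$ are immediate from the definition of $\eab$ together with the first statement). Since $\dim(\Sa\cap H)=a-1$, the defining formula gives $e_{(\Sa\cap H,\Sb\cap H)}=(-1)^{a}\,\V(\Sa\cap H,\Sb\cap H)$, so it suffices to establish
$$\V(\Sa\cap H,\Sb\cap H)=\V(\Sa,\Sb);$$
the sign then flips purely because of the one-dimensional drop $a\mapsto a-1$. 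To prove this identity I would choose a general point $Q\in\Sa\cap H$ and compare the two complex-link constructions at $Q$. The complex link of $(\Sa,\Sb)$ is obtained by intersecting $\Sb$ near $Q$ with a general affine slice cutting $\Sa$ down to a point together with one further off-center generic hyperplane; the complex link of the section $(\Sa\cap H,\Sb\cap H)$ is built the same way but inside $H$, hence using one fewer ambient generic slice. The point is that $H$ itself supplies exactly that missing slice: since $Q\neq P$ lies off the base locus $\{P\}$, a general $H$ through $P$ is, near $Q$, a general hyperplane and may be taken as one of the slicing hyperplanes. Invoking the independence of the complex link (up to homeomorphism) from the choices involved, via the local conic structure and Thom--Mather theory, identifies the two spaces and yields the displayed equality.

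The main obstacle is precisely this last identification: one must check that a hyperplane constrained to pass through the distinguished point $P$ is nevertheless generic enough at a general point $Q$ of $\Sa\cap H$ to serve as one of the generic slices in the complex-link construction, and that ``pre-slicing'' by $H$ commutes with the remaining slices up to the homeomorphism type of the link. I expect this to follow from the base locus being only $\{P\}$ together with the standard invariance of complex links, but it is where the genericity bookkeeping must be carried out carefully.
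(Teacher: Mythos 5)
Your proposal follows essentially the same route as the paper's (quite terse) proof: the first two items are dispatched by Bertini/transversality, and the third is reduced to the equality of complex links $V(\Sa\cap H,\Sb\cap H)=V(\Sa,\Sb)$, with the sign coming purely from the dimension drop $\dim(\Sa\cap H)=\dim\Sa-1$ in Definition~\ref{def:Kashiwara}. The genericity bookkeeping you flag at the end (that $H$, though constrained through $P$, is generic near a point $Q\neq P$ and can serve as one of the slices) is exactly the point the paper also leaves implicit, so your write-up is, if anything, more detailed than the original.
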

\begin{proof}
%Statement (1) is obvious. 
 The first two statements follow from Bertini's Theorem. The last statement follows from Definition~\ref{def:Kashiwara}.
 %Kashiwara's formula (see \cite[Theorem 2.10]{RW17}) which expresses the Euler obstruction of a pair by the Euler characteristic of a complex link up to a sign. 
Indeed,  $H$ is general, when $\alpha\neq 1$, the two pairs $(\Sa\cap H, \Sb\cap H)$ and $(\Sa, \Sb)$ has the same complex link.
In particular, $V(\Sa\cap H, \Sb\cap H)=V(\Sa, \Sb)$.
 Since $\dim (\Sa\cap H)=\dim \Sa-1$, the two Euler obstructions differ by a sign. 
\end{proof}

\begin{proof}[Proof of Theorem \ref{mainthm}]
When $X=\{P\}$ is a point, we have $ML_X(P)=1$ by definition. 
Thus, by Remark \ref{sufficient}, it suffices to show that the ML obstruction function satisfies the inductive formula~\eqref{inductive} from Proposition~\ref{prop:inductive}.

First, we plug formula (\ref{MLdegEu}) into the definition of $ML_X$, i.e., equation (\ref{eq:MLobfun}). By Proposition \ref{prophyperplane} (3), we obtain the following equation:
\begin{equation}\label{sum1}
\begin{split}
ML_X(P)=&\left(\chi(\Stau)-\sum_{\alpha\in\Lambda\setminus \{\tau\}}e_{\alpha,\tau}r_0\left(P, \overline{S}_\alpha\right)\right)\\
&-\left(\chi\left(\Stau\setminus \Hone\right)-\sum_{\alpha\in\Lambda\setminus \{\tau\}}e_{\alpha,\tau}r_1\left(P, \overline{S}_\alpha\right) \right)\\
&-\left(\chi\left(\Stau\cap \Hone\setminus \Htwo\right)+\sum_{\alpha\in\Lambda\setminus \{\tau\}}e_{\alpha,\tau}r_2\left(P, \overline{S}_\alpha\right) \right)\\
&-\cdots\\
&-\left(\chi\left(\Stau\cap \Hone\cap \cdots \cap H^{(d-1)}_P\setminus H^{(d)}_P\right)
-(-1)^{(d-1)}\sum_{\alpha\in\Lambda\setminus \{\tau\}}e_{\alpha,\tau}r_d\left(P, \overline{S}_\alpha\right) \right)\\
&-\chi\left(\Stau\cap \Hone\cap \cdots \cap H^{(d)}_P\right).
\end{split}
\end{equation}
Since the Euler characteristic is additive for a stratification of algebraic variety into locally closed subvarieties, we have
$$\chi(\Stau)=\chi\left(\Stau\setminus \Hone\right)+\chi\left(\Stau\cap \Hone\setminus \Htwo\right)\cdots+\chi\left(\Stau\cap \Hone\cap \cdots \cap H^{(d)}_P\right).$$
For $\alpha\in \Lambda\setminus \{\tau\}$, we have $\dim \Sa<d$, hence the following equation:
$$ML_{\bar{S}_\alpha}(P)=(-1)^{\dim {S}_\alpha}r_0\left(P, \overline{S}_\alpha\right)+(-1)^{\dim {S}_\alpha-1}r_1\left(P, \overline{S}_\alpha\right)+\cdots+(-1)^{\dim S_\alpha-d}r_d\left(P, \overline{S}_\alpha\right).$$
Here we recall our convention that $\MLdeg(\emptyset)=0$. After rearranging the terms,
equation  (\ref{sum1}) becomes the following equality
\begin{equation}
\begin{split}
ML_X(P)&=\sum_{\alpha\in \Lambda\setminus \{\tau\}}e_{\alpha,\tau}\left(-r_0\left(P, \overline{S}_\alpha\right)+r_1\left(P, \overline{S}_\alpha\right)+\cdots+(-1)^{d+1} r_d\left(P, \overline{S}_\alpha\right)\right)\\
&=\sum_{\alpha\in \Lambda\setminus \{\tau\}}(-1)^{\dim \Sa+1}e_{\alpha, \tau}ML_{\overline{S}_\alpha}(P).
\end{split}
\end{equation}
This proves that  $ML_X$ satisfies the same inductive formula (\ref{inductive}) as $Eu_X$. %completing the proof.%,  hence completed the proof of Theorem \ref{mainthm}. 
\end{proof}

\section{Algorithms and examples}\label{sec:examples}
In  this section we provide algorithms to compute values of $Eu_X$ and illustrative examples.

\subsection{Algorithm for computing Euler obstruction functions}

The following algorithm computes the Euler obstruction function. 
%\begin{algorithm}
\begin{itemize}
\item Input: A point $P\in(\mathbb{C}^*)^N.$ 
and a system of equations $F$ defining the $d$-dimensional variety
$X$. 
\item Output: $ML_X(P)$.
\item Procedure: 
\begin{enumerate}
%\item\label{item:randomizeF} Let $G$ denote  $n-d$ random linear combinations of $F$.
\item\label{item:randomizeH} For $i=1,2\dots,d$, construct general
affine linear polynomials $\Hone,\Htwo,\dots,\Hd$ that vanish at
the point $P$.
\item\label{item:computeRemovalMLDegree} For $k=1,\dots, d$, 
%
%\begin{enumerate}
%\item\label{item:computeRemovalMLDegree}
 compute the $k$-th removal ML degree of $X$ with respect to $P$ and store this value as $r_k(P,X)$.
%\end{enumerate}
%
\item\label{item:computOutput} Return the value of the alternating sum in equation \eqref{eq:MLobfun}.
\end{enumerate}
\end{itemize}
%\end{algorithm}

\iffalse
\begin{lemma}\label{lemma:H}
Let $U_1$ denote the family of general linear combination of the generators of the ideal of $P$.
Let $U_2$ denote the family general hyperplane through $P$ with respect to a Whitney stratification 
$\mfS(X)$. 
Then $U_1\cap U_2$ is a dense open Zariski set in the space of hyperplanes. 
\end{lemma}
\fi
%This lemma shows that we are able to construct hyperplanes in $U_2$ by constructing a hyperplane in $U_1$ with probability one. 

The proof of correctness for the above algorithm is as follows. 

\begin{proof}[Proof of Correctness]
%Item \eqref{item:randomizeF} allows us to work with a complete intersection without any issues, see \cite{} for a reference. 
Item \eqref{item:randomizeH} can be constructed for the following reason.
Fix a Whitney stratification $\mfS(X)$.
Let $U$ be the set of hyerplanes defined by a regular sequence 
$\left\{\Hone,\Htwo,\dots,\Hd\right\}$ such that 
 $S$ intersects $\Hone\cap\dots\cap\Hk$ transversally for each $S\in\mfS(X)$.
% $\left\{\Hone,\Htwo,\dots,\Hd\right\}$ such that 
%Let $U_1$ denote the family of general hyperplanes containing the point $P$. 
%Let $U_2$ denote the family general hyperplane through $P$ with respect to a Whitney stratification $\mfS(X)$. 
Then $U$ is a dense Zariski open subset and a member can be constructed by a choosing a random linear combination of minimal generators of the ideal of $P$.
By algorithms computing ML degrees \cite{HKS05}, 
item \eqref{item:computeRemovalMLDegree} can be achieved.
%This follows from Definition \ref{def:Removal}, see \cite{} for references. 
Item \eqref{item:computOutput} follows from Theorem \ref{mainthm}.
\end{proof}

\subsection{Illustrative Examples}

%%%%%%%%%%%%%%%%%%%%%%%%%%%%%%%%%%%%%%%%%%%%%%%%
\begin{example}

Consider a general very affine curve $\X$ of degree $D$ in 
$\left(\mathbb{C}^*\right)^2$. 
We have the following values for the removal ML degrees, where $P_0$ is a general point in $\left(\mathbb{C}^*\right)^2$ and $P_1$ is a smooth point on the curve:
$$
\begin{array}{rccc|ccc}
k: & 0 & 1 & 2 &ML_X \\
\hline
r_{k}\left(P_0,X\right): & D^2  & D^2+D &  D&	0\\
r_{k}\left(P_1,X\right): & D^2  & D^2+D &  D-1&	1.\\
\end{array}
$$
If the very affine curve $X$ is the nodal cubic we have the following values, where 
$P_2$ is the singular points of the curve:
$$
\begin{array}{rccc|ccc}
k: & 0 & 1 & 2 &ML_X \\
\hline
r_{k}\left(P_0,X\right): &7  & 10 & 3&	0\\
r_{k}\left(P_1,X\right): & 7  & 10 &  2&	1\\
r_{k}\left(P_2,X\right): & 7  & 10 &  1&  2.\\
\end{array}
$$

\end{example}
%%%%%%%%%%%%%%%%%%%%%%%%%%%%%%%%%%%%%%%%%%%%%%%%

%In our method we are computing points away from the singular locus. 

%%%%%%%%%%%%%%%%%%%%%%%%%%%%%%%%%%%%%%%%%%%%%%%%
\begin{example}
In this example we consider a hyperelliptic curve $X$
defined by 
$$
(y-2)^2=(x-1)(x^5+2x+5)^2.
$$
This curve has five isolated singularities:
$$
P_1:=(\zeta_1,2),P_2:=(\zeta_2,2),P_3:=(\zeta_3,2),P_4:=(\zeta_4,2),P_5:=(\zeta_5,2),
$$
where $\zeta$ is a root of $x^5+2x+5$.
While the $x$ coordinates of the singular points cannot be written in radicals, they can be approximated numerically as 
$$\begin{array}{ccccc}
-1.20892 ,&-0.562583\pm1.23444\sqrt{-1}, & 1.16704\pm 0.940954\sqrt{-1}.
\end{array}
$$
Let $P_0$ denote a general point on the curve. 
Then we have the following removal ML degrees:
$$
\begin{array}{llllll}
r_0(P_0,X)=12&& r_1(P_0,X)=23&&r_2(P_0,X)=10\\
 r_0(P_i,X)=12&& r_1(P_i,X)=23&&r_2(P_i,X)=9 &\text{ for }i\neq 0.
\end{array}
$$
Therefore, the values of 
$ML_X$
 are 
$ML_X(P_0)=1$ and $ML_X(P_i)=2$ for $i\neq 0$.
%In all cases we have $r_0(P_0,X)=r_0(P_i,X)$. 
%Since we have a nodal singularity at $P_i$  an exercise shows that  $r_1(P_0,X)= r_1(P_i,X)$.
\end{example}

%%%%%%%%%%%%%%%%%%%%%%%%%%%%%%%%%%%%%%%%%%%%%%%%
\begin{example}\label{ex:umbrella}
In this example we consider a Whitney umbrella.
We apply a coordinate change to 
$x_1^2-x_2^2x_3$ taking $x_i\to x_i-1-2^{i-1}$.
We take the Whitney stratification of $X$ to be the point $(2,3,5)$, the singular points of $X$ minus the previous point, and the smooth points of $X$.
Let $P_3,P_2,P_1$ be points of the previous respective strata. 
Then, the removal ML degrees and maximum likelihood obstruction function values are as follows. 

$$
\begin{array}{rcccc|ccc}
k: & 0 & 1 & 2 &3 &ML_X \\
\hline
r_{k}\left(P_1,X\right): & 3 & 10 &  10&	2& 1\\
r_{k}\left(P_2,X\right): & 3  &10 &  10&	1&  2\\
r_{k}\left(P_3,X\right): & 3  &10 &  9 &       1&  1\\
\end{array}
$$

We see that these values agree with the values of the Euler obstruction function. % from Definition \ref{defnEuobfun}. 
 Indeed, we see $\Flag(P_3,X)$ has two flags: $\cF_1=(S_\rho\subset S_\tau)$ with two strata and $\cF_2$ containing all three strata. 
We have $V(\cF_1)=-1$ %{\color{red}Botong can not compute this number directly yet. He knows this is $-1$ because of the above computation.} 
and $V(\cF_2)=2$.% {\color{red} Correct?} .

\end{example}

%%%%%%%%%%%%%%%%%%%%%%%%%%%%%%%%%%%%%%%%%%%%%%%%

The next example
appears in statistics and is the determinant of a Hankel matrix.
% comes from a Hankel matrix and appears in statistics. 

\begin{example}
Let $\X$ denote the variety defined by $1=x_1+x_2+x_3+x_4+x_5$ and 
$$0=
\det\left[\begin{array}{ccc}
12x_{1} & 3x_{2} & 2x_{3}\\
3x_{2} & 2x_{3} & 3x_{4}\\
2x_{3} & 3x_{4} & 12x_{5}
\end{array}\right].$$
Let $P_0$  a  general point of $(\mathbb{C}^*)^5\setminus X$, 
 $P_1$ denote a general smooth point of $X$, and  
$P_2$ denote a general singular point of $X$. 
Then, the removal ML degrees are as follows. 

$$
\begin{array}{rccccc|ccc}
k: & 0 & 1 & 2 & 3 & 4 &ML_X \\
\hline
r_{k}\left(P_0,X\right): & 12  & 42 &  48& 21 &  3&	0\\
r_{k}\left(P_1,X\right):  & 12 & 42 & 48 & 21 & 2 &	1\\
r_{k}\left(P_2,X\right): & 12 & 42 & 48 & 19 & 1 &	 0\\
\end{array}
$$
If we change the affine constraint to $x_1=1$, then we have the removal ML degrees.
$$
\begin{array}{rccccc|ccc}
k: & 0 & 1 & 2 & 3 & 4 & ML_X\\
\hline
r_{k}\left(P_0,X\right): & 0  & 16 &  31& 18 &  3&	0\\
r_{k}\left(P_1,X\right):  & 0  & 16 & 31 & 18 & 2&	1 \\
r_{k}\left(P_2,X\right): & 0  & 16 & 31 & 16 & 1&	0  \\
\end{array}
$$
Thus, the respective values of $ML_X$ of the two tables agree despite having different removal ML degrees. 

\end{example}

%{(0, 0), (1, 16), (2, 31), (3, 18), (4, 2)}
%{(0, 0), (1, 16), (2, 31), (3, 16), (4, 1)}
%[10/8/17, 10:49:28 PM] Jose Rodriguez: {(0, 0), (1, 16), (2, 31), (3, 18), (4, 3)}--not on the variety 
%[10/8/17, 10:52:11 PM] Jose Rodriguez: {(0, 0), (1, 16), (2, 31), (3, 18), (4, 2)}--singular point 

%%%%%%%%%%%%%%%%%%%%%%%%%%

\subsection{Homotopy continuation}
In this subsection we use homotopy continuation to determine the removal ML degrees of points in $X$  from the removal ML degrees of a general point in $(\mathbb{C}^*)^N\setminus X$.

Let $X$ be a pure $d$-dimensional subvariety of $(\bC^*)^N$ with coordinates $(z_1, \ldots, z_N)$. 
%Let $S_\rho$ and $S_\tau$ be two strata from a chosen Whitney stratification of $X$ such that $S_\rho\subset \overline{S}_\tau$, or $S_\tau=(\bC^*)^N\setminus X$ and $S_\rho$ be any Whitney stratum of $X$. 
Let $\gamma: [0, 1]\to (\bC^*)^N$ be a smooth path such that $\gamma(t)\in (\bC^*)^N\setminus X$ for $0\leq t<1$ and $\gamma(1)\in X$. 

For a fix a set of general directions $A$ of $k$ hyperplanes in $(\bC^*)^N$, which can be considered as an $N\times k$-matrix of rank $k$. 
For each point $\gamma(t)\in (\bC^*)^N$, there exists a column vector $\left(p_1(t), \ldots, p_k(t)\right)^T$ such that $\gamma(t)$ is contained in the affine subspace defined by 
$$A\cdot 
\left[
\begin{array}{c}
z_1\\
z_2\\
\cdots\\
z_N
\end{array}
\right]
=
\left[
\begin{array}{c}
p_1(t)\\
p_2(t)\\
\cdots\\
p_k(t)
\end{array}
\right].$$
Denote the $i$-th row of $A$ by $A_i$, and let $H^{(i)}_t$ be the hyperplane defined by 
$$A_i\cdot [z_1, z_2, \ldots, z_N]^T=p_i(t)$$
for $1\leq i\leq k$. 

\begin{prop}
With the notation above, for a general choice of $A\in \bC^{N\times k}$ and 
general~$(u_1, u_2, \ldots, u_{N+1})\in\bC^{N+1}$ 
(away from a real algebraic subset of codimension one in $\bC^{N\times k}\times \bC^{N+1}$), 
for every $t\in [0, 1]$ the hyperplanes $H^{(1)}_t, H^{(2)}_t, \ldots, H^{(k)}_t$ are in general position and the 1-form 
$$\eta_t:=u_1\frac{dz_1}{z_1}+u_2\frac{dz_2}{z_2}+\cdots+u_N\frac{dz_N}{z_N}+u_{N+1}\frac{dy}{y}$$
is a general 1-form on $X\cap H^{(1)}_t\cap H^{(2)}_t\cap \cdots \cap H^{(k-1)}_t\setminus H^{(k)}_t$ in the sense of Definition \ref{def:Removal} (and Remark \ref{generaldef}), where $y=A_k\cdot (z_1, z_2, \ldots, z_N)-p_k(t)$. 
In particular, the 1-form $\eta_t$ has only regular degeneration points on $X\cap H^{(1)}_t\cap H^{(2)}_t\cap \cdots \cap H^{(k-1)}_t\setminus H^{(k)}_t$, 
and the number of degeneration points is equal to $r_k(\gamma(t), X)$. 
%$$r_k(\gamma(t), X)=\MLdeg\left(X\cap H^{(1)}_t\cap H^{(2)}_t\cap \cdots \cap H^{(k-1)}_t\setminus H^{(k)}_t\right).$$
\end{prop}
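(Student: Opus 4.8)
The plan is to treat $(A,u)\in\bC^{N\times k}\times\bC^{N+1}$ as parameters and the base point $q=\gamma(t)$ as a third coordinate, realize the degeneration points as an algebraic family over the total space $\bC^{N\times k}\times\bC^{N+1}\times\CStarN$, and then split the genericity into a \emph{pointwise} part (for each fixed $q$ a Zariski-dense open set of good $(A,u)$) and a \emph{uniform} part (a single $(A,u)$ works for the whole path). The uniform part will come from a real-dimension count that absorbs the one real dimension swept by $t\in[0,1]$ against the real codimension (at least two) of the pointwise bad locus. First I would identify the count: via the embedding $(z,y)\colon W_t\hookrightarrow\CStarNplusOne$ with $y=A_kz-p_k(t)$, where $W_t:=X\cap H_t^{(1)}\cap\cdots\cap H_t^{(k-1)}\setminus H_t^{(k)}$, the form $\eta_t$ is the restriction of the standard invariant log-$1$-form, so its degeneration points on $W_t$ are exactly the ones computing $\MLdeg(W_t)=r_k(\gamma(t),X)$ in the sense of Definition \ref{def:Removal} and Remark \ref{generaldef}.

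Next I would fix a Whitney stratification of $X$ and define a constructible discriminant $\Delta\subset\bC^{N\times k}\times\bC^{N+1}\times\CStarN$ consisting of all $(A,u,q)$ for which \emph{either} the normals $A_1,\dots,A_k$ are dependent or the hyperplanes fail to meet some stratum transversally, \emph{or} the degeneration locus of $\eta$ on the corresponding $W$ fails to be finite and reduced, meets $H^{(k)}$, or escapes to the boundary of the torus. For each fixed $q$ the slice $\Delta_q:=\Delta\cap(\bC^{N\times k}\times\bC^{N+1}\times\{q\})$ is a proper Zariski-closed subset: this is precisely the existence of good parameters asserted by Definition \ref{def:Removal} (and Remark \ref{generaldef} when $q\notin X$), valid for \emph{every} $q$, including the distinguished endpoint $\gamma(1)\in X$. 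On the connected complement of $\Delta_q$ all degeneration points are regular and, by the properness built into $\Delta$, their number is locally constant in $(A,u)$ and hence equals the generic value $r_k(q,X)$. In particular each $\Delta_q$ has real codimension at least two.

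Then I would run the uniform count. Let $\Phi(A,u,t)=(A,u,\gamma(t))$ and $\mathcal B:=\Phi^{-1}(\Delta)$; since $\Delta$ is constructible and $\gamma$ is smooth, $\mathcal B$ is subanalytic. For each fixed $t$ the slice of $\mathcal B$ is $\Delta_{\gamma(t)}$, of real dimension at most $\dim_{\mathbb{R}}(\bC^{N\times k}\times\bC^{N+1})-2$, so a fibre-dimension bound over the one-real-dimensional interval $[0,1]$ gives $\dim_{\mathbb{R}}\mathcal B\le\dim_{\mathbb{R}}(\bC^{N\times k}\times\bC^{N+1})-1$. Projecting $\mathcal B$ to the $(A,u)$-factor therefore yields a subset of real codimension at least one; its closure is the proper real-analytic (indeed real-algebraic, if $\gamma$ is algebraic) subset named in the statement. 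Any $(A,u)$ outside it avoids $\Delta_{\gamma(t)}$ for all $t\in[0,1]$, which is exactly the assertion that the hyperplanes are in general position, that $\eta_t$ is general in the sense of Definition \ref{def:Removal}, and that it has only regular degeneration points, of number $r_k(\gamma(t),X)$.

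The main obstacle is precisely this passage from pointwise to uniform genericity: because $t$ ranges over a real continuum while $(A,u)$ is held fixed, Zariski density is useless, and everything hinges on the pointwise bad loci $\Delta_q$ being genuinely complex-algebraic (real codimension at least two) so that the single real dimension of the path is absorbed. Two checks deserve care. First, the genericity must be verified at the specific, possibly singular, endpoint $\gamma(1)\in X$ rather than at a generic point of $X$; this is available because it is exactly the content of Definition \ref{def:Removal} applied at $P=\gamma(1)$. Second, I must ensure the properness conditions — finiteness, disjointness from $H^{(k)}$, and no escape to the torus boundary or infinity — are genuinely part of $\Delta$, since otherwise the degeneration count could fail to be locally constant on the complement of $\Delta_q$; these are the same conditions that make $\MLdeg$ well defined, and they are what guarantees that no tracked point is lost along the homotopy so that the endpoint count is indeed $r_k(\gamma(1),X)$.
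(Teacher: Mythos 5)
Your proposal is correct and follows essentially the same route as the paper's proof: for each fixed $t$ the bad parameters $(A,u)$ lie in a complex algebraic hypersurface (real codimension two), and sweeping over the one real dimension of $t\in[0,1]$ yields a bad set of real codimension at least one, so a single general $(A,u)$ works for the whole path. Your write-up merely makes explicit the discriminant construction and the local-constancy/properness bookkeeping that the paper's two-sentence proof leaves implicit.
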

\begin{proof}
For a given $t$, the non-general choice of $A$ and $(u_1, u_2, \ldots, u_{N+1})$ is contained in an algebraic hypersurface in $\bC^{N\times k}\times \bC^{N+1}$.
 As $t$ varies in the interval $[0, 1]$, the union of bad locus of all $t$ is contained in a real analytic set in $\bC^{N\times k}\times \bC^{N+1}$ of real codimension one. Thus, the proposition follows. 
\end{proof}
\begin{cor}\label{cor:homotopy}
Fixing a general choice of $A$ and $(u_1, u_2, \ldots, u_{N+1})$, one  obtains all of  the degeneration points of the 1-form $\eta_1$ on $X\cap H^{(1)}_1\cap H^{(2)}_1\cap \cdots \cap H^{(k-1)}_1\setminus H^{(k)}_1$ by homotopy continuation from the degeneration points of the 1-form $\eta_0$ on $X\cap H^{(1)}_0\cap H^{(2)}_0\cap \cdots \cap H^{(k-1)}_0\setminus H^{(k)}_0$. 
\end{cor}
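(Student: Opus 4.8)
The plan is to realize the degeneration points as the solutions of a parametrized square system and then invoke the standard theory of homotopy continuation, using the preceding Proposition to supply the nondegeneracy that makes path tracking valid. First I would encode, for each $t$, the degeneration locus of $\eta_t$ on $X\cap H^{(1)}_t\cap\cdots\cap H^{(k-1)}_t\setminus H^{(k)}_t$ as the zero set of a system $G(z,t)=0$ in the coordinates $(z_1,\dots,z_N,y)$ on $\CStarNplusOne$, where $y=A_k\cdot z-p_k(t)$: this consists of the defining equations of $X$, the linear equations $A_i\cdot z=p_i(t)$ for $1\le i\le k-1$, and the likelihood (conormal) equations expressing that $\eta_t$ lies in the conormal space at $z$ in the sense of Definition~\ref{def:Removal} and Remark~\ref{generaldef}. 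Since $\gamma$ is smooth and $p_i(t)=A_i\cdot\gamma(t)$, the coefficients of $G(z,t)$ depend smoothly on $t$, so $G$ is a smooth homotopy on $[0,1]$.

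The next step is to extract regularity from the preceding Proposition. For the fixed general choice of $A$ and $(u_1,\dots,u_{N+1})$, it guarantees that for \emph{every} $t\in[0,1]$ the form $\eta_t$ has only regular degeneration points, and that there are exactly $r_k(\gamma(t),X)$ of them. In system terms this says precisely that at every solution of $G(\cdot,t)=0$ the Jacobian $\partial G/\partial z$ is nonsingular, simultaneously for all $t\in[0,1]$. By the implicit function theorem each solution then extends to a unique smooth path $z(t)$ of degeneration points, and the incidence set $\Sigma=\{(t,z):G(z,t)=0\}$ is a smooth real $1$-manifold whose projection to $[0,1]$ is a local diffeomorphism with discrete fibers.

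For $t\in[0,1)$ we have $\gamma(t)\notin X$, so the fiber cardinality is the constant generic value $r_k(\gamma(t),X)$; a local diffeomorphism with constant finite fiber over the connected interval $[0,1)$ is a trivial covering (a path escaping to infinity or to the boundary of $\CStarNplusOne$ at some interior $t_0$ would drop the count there, contradicting constancy). Hence the $r_k(\gamma(0),X)$ start solutions at $t=0$ extend to globally defined, non-colliding paths on all of $[0,1)$, which is exactly what tracking the degeneration points of $\eta_0$ forward by homotopy continuation produces. To finish I would prove surjectivity onto the target solutions: given any degeneration point $z^\ast$ of $\eta_1$, regularity \emph{at $t=1$} lets me apply the implicit function theorem there to produce a path of degeneration points ending at $z^\ast$ and defined for $t$ slightly below $1$; this path is a sheet of the trivial covering over $[0,1)$, hence continues back to a degeneration point of $\eta_0$ whose forward track reaches $z^\ast$. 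As $z^\ast$ is arbitrary, every degeneration point of $\eta_1$ is obtained.

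The main obstacle is the endpoint $t=1$, where $\gamma(1)\in X$ and the count $r_k(\gamma(1),X)$ may be strictly smaller than $r_k(\gamma(0),X)$: some paths diverge to infinity or to the boundary of the torus, so a naive forward count does not by itself show that all target solutions are hit. The fix is to argue \emph{backward} from each target solution, using that the Proposition provides nondegeneracy including at $t=1$; this is what both rules out a target point being missed and prevents two distinct paths from converging to the same simple target. One should also record that the diverging paths cause no trouble (they simply fail to limit inside $\CStarNplusOne$), and that smoothness of $\gamma$ rather than analyticity suffices, since the implicit function theorem and the path tracking only require $C^1$ dependence on $t$.
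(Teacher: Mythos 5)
Your proposal is correct in substance, but it takes a more self-contained route than the paper. The paper's proof is a single line: it invokes the preceding Proposition together with standard coefficient-parameter continuation theory, citing \cite[Theorem 7.1.1]{SW05}, which packages exactly the facts you establish by hand. What you do differently is reprove the relevant instance of that theorem: you set up the parametrized critical-point system $G(z,t)=0$, use the Proposition's regularity guarantee (valid for \emph{every} $t\in[0,1]$, including $t=1$) to produce local smooth solution paths via the implicit function theorem, upgrade ``local homeomorphism with constant finite fibers over $[0,1)$'' to a trivial covering, and then argue backward from each target point at $t=1$. The backward step is a genuine simplification made possible by the Proposition: since regularity holds at $t=1$ as well, you need no endgame or limit analysis for singular endpoints, and you get for free that distinct tracked paths cannot collide at a target solution. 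What the paper's citation buys is brevity and generality (the cited theorem also handles degenerate endpoint behavior); what your argument buys is transparency about exactly which hypotheses are used and where the Proposition enters.

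Two caveats are worth recording. First, your claim that for $t\in[0,1)$ the fiber count equals ``the constant generic value $r_k(\gamma(t),X)$'' \emph{because} $\gamma(t)\notin X$ is asserted rather than proved: a priori $r_k(P,X)$ could drop below its generic value on a proper subvariety of $(\bC^*)^N$ strictly larger than $X$, and constancy of the count on $[0,1)$ is precisely what both your covering argument and the hypothesis of \cite[Theorem 7.1.1]{SW05} require. The paper leaves the same point implicit when it invokes that theorem, so you are no worse off, but it is the one unproven step shared by both arguments. Second, when $X$ or its hyperplane sections are singular, the zero set of the Lagrange/conormal system $G(\cdot,t)=0$ can strictly contain the set of degeneration points (spurious solutions lying over the singular locus), so ``nonsingular Jacobian at every solution of $G$'' should read ``at every degeneration point''; your argument survives this correction because it only applies the implicit function theorem at degeneration points, which lie in the smooth locus, and smoothness is an open condition, so nearby solutions of $G$ are again degeneration points.
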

\begin{proof}
This follows from the above proposition and the standard coefficient-parameter theory (see \cite[Theorem 7.1.1]{SW05}). 
\end{proof}

\begin{remark}
If we need to compute the value  Euler obstruction at several points on the variety $X\subset (\bC^*)^N$, we can use the above corollary to reduce to total amount of computation as follows.
 Suppose we want to compute $r_k(Q_j, X)$ for a sequence of points $Q_j\in X$. First, make general choices of $P\in (\bC^*)^N$, $A\in \bC^{N\times k}$ and $(u_1, u_2, \ldots, u_{N+1})\in \bC^{N+1}$. 
 For each $Q_j$, let $\gamma: [0, 1]\to (\bC^*)^N$ be the line segment from $P$ to $Q_j$. 
 Since $P$ is general, $\gamma(t)\in (\bC^*)^N\setminus X$ for all $t\in [0,1)$.
  Compute the degeneration points of $\eta_0$. 
  Then, use homotopy continuation to find the degeneration points of $\eta_1$. 
\end{remark}

\begin{example}
Let $X$ denote the $3\times 3$ rank two matrices with the affine constraint $x_{11}=1$.
Let $P_0$ denote a general point in $\left(\mathbb{C}^*\right)^9\setminus X$, 
$P_1$ a general smooth point in $X$, and $P_2$ a general singular point in $X$. 
We compute the following removal ML degrees: 
$$
\begin{array}{rcccccccc|c}
k: & 0 & 1 & 2 & 3 & 4 & 5 & 6 & 7& ML_X\\
\hline
r_{k}\left(P_0,X\right): &  39 & 204 & 444 & 519 & 351 &  138 & 30 & 3 & 0\\
r_{k}\left(P_1,X\right):  &  39 & 204 & 444 & 519 & 351 &  138 & 30  &  2 & 1\\
r_{k}\left(P_2,X\right): &  39 & 204 &  444& 519 & 351 &  136 & 28 &  1& 2.
%\hline
%\text{\# Start points}:& 15120 & 35280 & 52920 & 52920 & 35280 & 15120 & 3780  & 420
\end{array}
$$
To compute these numbers, we first compute the degeneration points  with respect to $r_k(P_0,X)$ for each $k$. 
Then, we use homotopy continuation to take these degeneration points to the degeneration points of $\eta_X$ with respect to $r_k(P_i,X)$ for $i=1,2$.
We observe a consequence of Corollary \ref{cor:homotopy} that the the numbers of the $k$-th column are bounded above by $r_k(P_0,X)$.

The values of $ML_X$ are consistent with the computation in \cite{RW17}. 
\end{example}

%\begin{question}
%Consider a flag $\cF\in\Flag(P,X)$. 
%Let $Q$ denote a point in a a strata of the flag. 
%Is it true that $r_k(P,X)\leq r_k(Q,X)$?

%\end{question}

%\subsection{Almost toric hypersurfaces}

\bibliographystyle{abbrv}
\bibliography{mlEC_V2}

\begin{thebibliography}{10}

\bibitem{BLS00}
J.-P. Brasselet, L.~D. Tr\'ang, and J.~Seade.
\newblock Euler obstruction and indices of vector fields.
\newblock {\em Topology}, 39(6):1193--1208, 2000.

\bibitem{CHKS06}
F.~Catanese, S.~Ho{\c{s}}ten, A.~Khetan, and B.~Sturmfels.
\newblock The maximum likelihood degree.
\newblock {\em Amer. J. Math.}, 128(3):671--697, 2006.

\bibitem{Dim04}
A.~Dimca.
\newblock {\em Sheaves in topology}.
\newblock Universitext. Springer-Verlag, Berlin, 2004.

\bibitem{EM99}
S.~Evens and I.~Mirkovi{\'c}.
\newblock Characteristic cycles for the loop {G}rassmannian and nilpotent
  orbits.
\newblock {\em Duke Math. J.}, 97(1):109--126, 1999.

\bibitem{FK00}
J.~Franecki and M.~Kapranov.
\newblock The {G}auss map and a noncompact {R}iemann-{R}och formula for
  constructible sheaves on semiabelian varieties.
\newblock {\em Duke Math. J.}, 104(1):171--180, 2000.

\bibitem{HKS05}
S.~Ho{\c{s}}ten, A.~Khetan, and B.~Sturmfels.
\newblock Solving the likelihood equations.
\newblock {\em Found. Comput. Math.}, 5(4):389--407, 2005.

\bibitem{Huh13}
J.~Huh.
\newblock The maximum likelihood degree of a very affine variety.
\newblock {\em Compos. Math.}, 149(8):1245--1266, 2013.

\bibitem{Mac74}
R.~D. MacPherson.
\newblock Chern classes for singular algebraic varieties.
\newblock {\em Ann. of Math. (2)}, 100:423--432, 1974.

\bibitem{RW17}
J.~I. Rodriguez and B.~Wang.
\newblock The maximum likelihood degree of mixtures of independence models.
\newblock {\em SIAM Journal on Applied Algebra and Geometry}, 1(1):484--506,
  2017.

\bibitem{ST10}
J.~Schürmann and M.~Tibar.
\newblock Index formula for {M}acpherson cycles of affine algebraic varieties.
\newblock {\em Tohoku Math. J. (2)}, 62(1):29--44, 2010.

\bibitem{SW05}
A.~J. Sommese and C.~W. Wampler, II.
\newblock {\em The numerical solution of systems of polynomials}.
\newblock World Scientific Publishing Co. Pte. Ltd., Hackensack, NJ, 2005.
\newblock Arising in engineering and science.

\end{thebibliography}


\begin{thebibliography}{10}
\bibitem{BLS00}
J.-P. ~Brasselet, D.T. ~L\^e,  and J. ~Seade.
\newblock Euler obstruction and indices of vector fields.
\newblock {\em Topology} 39 (2000), no. 6, 1193-1208. 

\bibitem{BW14}
N.~Budur and B.~Wang.
\newblock The signed {E}uler characteristic of very affine varieties.
\newblock {\em arXiv:1403.4371}, 2014.

\bibitem{BW15}
N.~Budur and B.~Wang.
\newblock Bounding the maximum likelihood degree.
\newblock {\em To appear in Math. Res. Lett}, 2015.

\bibitem{CHKS06}
F.~Catanese, S.~Ho{\c{s}}ten, A.~Khetan, and B.~Sturmfels.
\newblock The maximum likelihood degree.
\newblock {\em Amer. J. Math.}, 128(3):671--697, 2006.

\bibitem{Dim04}
A.~Dimca.
\newblock {\em Sheaves in topology}.
\newblock Universitext. Springer-Verlag, Berlin, 2004.

\bibitem{DR14}
J.~Draisma and J.~Rodriguez.
\newblock Maximum likelihood duality for determinantal varieties.
\newblock {\em Int. Math. Res. Not. IMRN}, (20):5648--5666, 2014.

\bibitem{EM99}
S.~Evens and I.~Mirkovi{\'c}.
\newblock Characteristic cycles for the loop {G}rassmannian and nilpotent
  orbits.
\newblock {\em Duke Math. J.}, 97(1):109--126, 1999.

\bibitem{FK00}
J.~Franecki and M.~Kapranov.
\newblock The {G}auss map and a noncompact {R}iemann-{R}och formula for
  constructible sheaves on semiabelian varieties.
\newblock {\em Duke Math. J.}, 104(1):171--180, 2000.

\bibitem{Gin86}
V.~Ginsburg.
\newblock Characteristic varieties and vanishing cycles.
\newblock {\em Invent. Math.}, 84(2):327--402, 1986.

\bibitem{GM83}
M.~Goresky and R.~MacPherson.
\newblock Stratified {M}orse theory.
\newblock In {\em Singularities, {P}art 1 ({A}rcata, {C}alif., 1981)},
  volume~40 of {\em Proc. Sympos. Pure Math.}, pages 517--533. Amer. Math.
  Soc., Providence, R.I., 1983.

\bibitem{GDP12}
E.~Gross, M.~Drton, and S.~Petrovi{\'c}.
\newblock Maximum likelihood degree of variance component models.
\newblock {\em Electron. J. Stat.}, 6:993--1016, 2012.

\bibitem{GR14}
E.~Gross and J.~I. Rodriguez.
\newblock Maximum likelihood geometry in the presence of data zeros.
\newblock In {\em Proceedings of the 39th International Symposium on Symbolic
  and Algebraic Computation}, ISSAC '14, pages 232--239, New York, NY, USA,
  2014. ACM.

\bibitem{HRS14}
J.~D. Hauenstein, J.~I. Rodriguez, and B.~Sturmfels.
\newblock Maximum likelihood for matrices with rank constraints.
\newblock {\em J. Algebr. Stat.}, 5(1):18--38, 2014.

\bibitem{HKS05}
S.~Ho{\c{s}}ten, A.~Khetan, and B.~Sturmfels.
\newblock Solving the likelihood equations.
\newblock {\em Found. Comput. Math.}, 5(4):389--407, 2005.

\bibitem{HS09}
S.~Hosten and S.~Sullivant.
\newblock The algebraic complexity of maximum likelihood estimation for
  bivariate missing data.
\newblock In P.~Gibilisco, E.~Riccomagno, M.~P. Rogantin, and H.~P. Wynn,
  editors, {\em Algebraic and Geometric Methods in Statistics}, pages 123--134.
  Cambridge University Press, 2009.
\newblock Cambridge Books Online.

\bibitem{HTT08}
R.~Hotta, K.~Takeuchi, and T.~Tanisaki.
\newblock {\em {$D$}-modules, perverse sheaves, and representation theory},
  volume 236 of {\em Progress in Mathematics}.
\newblock Birkh\"auser Boston, Inc., Boston, MA, 2008.
\newblock Translated from the 1995 Japanese edition by Takeuchi.

\bibitem{Huh13}
J.~Huh.
\newblock The maximum likelihood degree of a very affine variety.
\newblock {\em Compos. Math.}, 149(8):1245--1266, 2013.

\bibitem{HS2014}
J.~Huh and B.~Sturmfels.
\newblock Likelihood geometry.
\newblock In {\em Combinatorial algebraic geometry}, volume 2108 of {\em
  Lecture Notes in Math.}, pages 63--117. Springer, Cham, 2014.

\bibitem{KRS15}
K.~Kubjas, E.~Robeva, and B.~Sturmfels.
\newblock Fixed points of the {EM} algorithm and nonnegative rank boundaries.
\newblock {\em Ann. Statist.}, 43(1):422--461, 2015.

\bibitem{Mac74}
R.D.~MacPherson.
\newblock Chern classes for singular algebraic varieties.
\newblock {\em Ann. of Math.} (2) 100 (1974), 423--432. 

\bibitem{OT92}
P.~Orlik and H.~Terao.
\newblock {\em Arrangements of hyperplanes}, volume 300 of {\em Grundlehren der
  Mathematischen Wissenschaften [Fundamental Principles of Mathematical
  Sciences]}.
\newblock Springer-Verlag, Berlin, 1992.

\bibitem{PS05}
L.~Pachter and B.~Sturmfels.
\newblock Statistics.
\newblock In {\em Algebraic statistics for computational biology}, pages 3--42.
  Cambridge Univ. Press, New York, 2005.


\bibitem{Rod14}
J.~I. Rodriguez.
\newblock Maximum likelihood for dual varieties.
\newblock In {\em Proceedings of the 2014 Symposium on Symbolic-Numeric
  Computation}, SNC '14, pages 43--49, New York, NY, USA, 2014. ACM.

\bibitem{RW17}
J.~I. Rodriguez and B. ~Wang.
\newblock The maximum likelihood degree of mixtures of independence models
\newblock To appear in {\em SIAM Journal on Applied Algebra and Geometry}.

\bibitem{ST10}
J. Sch\"urmann and Mihai Tib\vee{a}r

\bibitem{Uhler12}
C.~Uhler.
\newblock Geometry of maximum likelihood estimation in {G}aussian graphical
  models.
\newblock {\em Ann. Statist.}, 40(1):238--261, 2012.

\end{thebibliography}
%

\iffalse

\fi

\end{document}